\newtheorem{theorem}{Theorem}[section]
\newtheorem{lemma}[theorem]{Lemma}
\newtheorem{corollary}[theorem]{Corollary}
\theoremstyle{definition}
\newtheorem{definition}[theorem]{Definition}
\newtheorem{example}[theorem]{Example}
\newtheorem{remark}[theorem]{Remark}
\newcommand\C{\mathbb{C}}
\newcommand\D{\mathbb{D}}
\newcommand\N{\mathbb{N}}
\newcommand{\cA}{\mathcal{A}}
\newcommand{\cO}{\mathcal{O}}
\newcommand{\cV}{\mathcal{V}}
\newcommand{\der}{\mathrm{d}}
\newcommand{\id}{\mathrm{id}}
\newcommand{\T}{\mathrm{T}}
\DeclareMathOperator{\Bl}{Bl}
\DeclareMathOperator{\Op}{Op}
\DeclareMathOperator{\Supp}{Supp}
\DeclareMathOperator{\Sing}{Sing}
\newcommand{\CAP}{\mathrm{CAP}}
\newcommand{\BOPA}{\mathrm{BOPA}}
\newcommand{\BOPI}{\mathrm{BOPI}}
\newcommand{\BOPAI}{\mathrm{BOPAI}}
\newcommand{\POPA}{\mathrm{POPA}}
\newcommand{\POPI}{\mathrm{POPI}}
\newcommand{\Ell}{\mathrm{Ell}}
\newcommand{\aCAP}{\mathrm{aCAP}}
\begin{document}

\title[An implicit function theorem for sprays and applications]{An implicit function theorem for sprays \\ and applications to Oka theory}
\author{Yuta Kusakabe}
\address{Department of Mathematics, Graduate School of Science, Osaka University, Toyonaka, Osaka 560-0043, Japan}
\email{y-kusakabe@cr.math.sci.osaka-u.ac.jp}
\subjclass[2020]{Primary 32E10, 32Q56; Secondary 32M05, 32S45}
\keywords{Stein space, Oka manifold, dominating spray, ellipticity, blowup}

\begin{abstract}
We solve fundamental problems in Oka theory by establishing an implicit function theorem for sprays. 
As the first application of our implicit function theorem, we obtain an elementary proof of the fact that approximation yields interpolation.
This proof and L\'{a}russon's elementary proof of the converse give an elementary proof of the equivalence between approximation and interpolation.
The second application concerns the Oka property of a blowup.
We prove that the blowup of an algebraically Oka manifold along a smooth algebraic center is Oka.
In the appendix, equivariantly Oka manifolds are characterized by the equivariant version of Gromov's condition $\Ell_{1}$, and the equivariant localization principle is also given.
\end{abstract}

\maketitle

%
%

\section{Introduction}

In the present paper, we establish an implicit function theorem for \emph{(holomorphic) sprays} to solve fundamental problems in modern Oka theory initiated by Gromov \cite{Gromov1989} in 1989 (cf. \cite{Forstneric2013,Forstneric2017}).
Here is the definition of sprays.

\begin{definition}
\label{definition:spray}
Let $X$ be a complex space and $Y$ be a complex manifold.
\begin{enumerate}[leftmargin=*]
\item A \emph{(local) spray} over a holomorphic map $f:X\to Y$ is a holomorphic map $s:U\to Y$ from an open neighborhood of the zero section of a holomorphic vector bundle $E\to X$ such that $s(0_{x})=f(x)$ for all $x\in X$.
Particularly in the case of $U=E$, $s$ is also called a \emph{global spray}.
\item A spray $s:U\to Y$ is \emph{dominating} if $s|_{U_{x}}:U_{x}\to Y$ is a submersion at $0_{x}$ for each $x\in X$.
\end{enumerate}
\end{definition}

The following is our implicit function theorem for sprays.
Here, the space of holomorphic maps $\cO(U,Y)$ and the space of holomorphic sections $\Gamma(\Omega,U)$ are equipped with the compact-open topologies.

\begin{theorem}
\label{theorem:implicit}
Let $X$ be a Stein space, $\Omega\Subset X$ be a relatively compact open subset, $Y$ be a complex manifold and $s_{f}:U\to Y$ be a dominating spray over a holomorphic map $f:X\to Y$.
Then there exist an open neighborhood $\cV\subset\cO(U,Y)$ of $s_{f}$ and a continuous map $\varphi:\cV\to\Gamma(\Omega,U)$ such that $\varphi(s_{f})=0$ and $s\circ\varphi(s)=f|_{\Omega}$ for all $s\in\cV$.
\end{theorem}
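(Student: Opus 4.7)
The plan is to reduce the nonlinear equation $s \circ \xi = f|_{\Omega}$ to a parametric inverse function theorem, using a splitting of the vertical linearization to single out a canonical transverse slice of $E$ along which the spray is a local biholomorphism. The dominating hypothesis says precisely that the vertical derivative along the zero section $d_{v}s_{f} : E \to f^{*}TY$ is a surjective morphism of holomorphic vector bundles over the Stein space $X$. The resulting short exact sequence $0 \to \ker(d_{v}s_{f}) \to E \to f^{*}TY \to 0$ splits by Cartan's Theorem~B applied to the coherent sheaf $\mathcal{H}om(f^{*}TY,\ker(d_{v}s_{f}))$, so one can fix a holomorphic bundle map $\sigma : f^{*}TY \to E$ with $d_{v}s_{f} \circ \sigma = \id$.

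Next I would attach to each $s \in \cO(U, Y)$ the holomorphic map
\[
\Phi_{s} : \sigma(f^{*}TY) \cap U \longrightarrow X \times Y, \qquad \Phi_{s}(\eta) := \bigl(\pi(\eta),\, s(\eta)\bigr),
\]
where $\pi : E \to X$ is the bundle projection. For $s = s_{f}$ the restriction of $\Phi_{s_{f}}$ to the zero section is the graph map $x \mapsto (x, f(x))$, and its fiberwise derivative at $0_{x}$ equals $d_{v}s_{f} \circ \sigma = \id$, so $d\Phi_{s_{f}}$ is an isomorphism at every point of the zero section above $\overline{\Omega}$. The holomorphic inverse function theorem applied with parameters, uniformly over the compact $\overline{\Omega}$, will furnish an open neighborhood $\cV \subset \cO(U, Y)$ of $s_{f}$, an open neighborhood $V \subset U$ of the zero section of $\sigma(f^{*}TY)|_{\overline{\Omega}}$, and an open neighborhood $W \subset X \times Y$ of the graph of $f|_{\overline{\Omega}}$, such that $\Phi_{s} : V \to W$ is a biholomorphism for every $s \in \cV$, with $\Phi_{s}^{-1}$ depending continuously on $s$ in the compact-open topology.

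Then $\varphi(s)(x) := \Phi_{s}^{-1}(x, f(x))$ for $s \in \cV$, $x \in \Omega$, lands in $\Gamma(\Omega, U)$; the required identity $s \circ \varphi(s) = f|_{\Omega}$ is the $Y$-component of $\Phi_{s}\bigl(\varphi(s)(x)\bigr) = (x, f(x))$, while $\varphi(s_{f}) = 0$ because $\Phi_{s_{f}}(0_{x}) = (x, f(x))$; continuity of $\varphi$ in the compact-open topology is inherited from that of $s \mapsto \Phi_{s}^{-1}$. The main technical obstacle is the parametric inverse function theorem of the previous paragraph: it must be made uniform jointly in $x \in \overline{\Omega}$ (by a compactness argument covering the zero section over $\overline{\Omega}$ with finitely many local IFT neighborhoods and patching the local inverses) and in the Fréchet parameter $s$ (via Cauchy estimates translating compact-open closeness on $\cO(U, Y)$ into uniform closeness on an auxiliary compact subset of $U$, so that Banach-space IFT applies). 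When $X$ is singular, one would first embed a Stein neighborhood of $\overline{\Omega}$ as a closed subvariety of a polydisc in $\C^{N}$ and extend $E$, $\sigma$, and $s_{f}$ to that polydisc, reducing to the smooth case before invoking IFT.
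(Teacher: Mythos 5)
Your argument is essentially the paper's argument in its skeleton: you split $E=\ker(d_{v}s_{f})\oplus(\text{complement})$ using Steinness of $X$ (the paper cites the splitting of exact sequences of holomorphic vector bundles, you invoke Theorem B for $\mathcal{H}om(f^{*}TY,\ker d_{v}s_{f})$ --- the same thing), you restrict the graph map $(p,s)$ to that complementary slice near the zero section, and you define $\varphi(s)(x)$ as the unique preimage of $(x,f(x))$. Where you genuinely differ is the key lemma for the perturbation step. The paper applies Rouch\'e's theorem fiberwise on the relatively compact fibers $U''_{x}$, $x\in\Omega$: for every $s$ uniformly close to $s_{f}$ this gives at once existence and uniqueness of the solution of $s(\cdot)=f(x)$ near $0_{x}$, and holomorphic dependence on $x$ together with continuity in $s$ then come essentially for free from uniqueness; moreover the argument is insensitive to singularities of $X$. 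Your route via a parametric inverse function theorem reaches the same point but needs two repairs. First, for varying $s$ the image $\Phi_{s}(V)$ is not a fixed open set $W$; the correct quantitative statement is that $\Phi_{s}$ is injective on $V$ and its image \emph{contains} a fixed neighborhood $W$ of the graph of $f|_{\overline\Omega}$ --- a statement most easily proved by exactly the Rouch\'e or contraction argument you are trying to avoid. Second, your treatment of singular $X$ by extending $E$, $\sigma$ and $s_{f}$ to the ambient polydisc is unjustified as stated: extension of a $Y$-valued map (or of the bundle) to the whole polydisc is not available, and even extension to a neighborhood of the embedded subvariety is a nontrivial theorem; but it is also unnecessary, since one can run the inverse function theorem in local ambient charts (locally $X$ embeds in some $\C^{M}$ and all data extend locally) and patch the local inverses, which agree by uniqueness --- or simply use the fiberwise Rouch\'e argument, which needs no ambient extension at all. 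With these adjustments your proof goes through; the paper's choice of Rouch\'e buys the uniform control, the uniqueness, and the singular base in a single step.
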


The proof of Theorem \ref{theorem:implicit} is given in Section \ref{section:proof}.
The relative version of this theorem (Remark \ref{remark:implicit}) was already used implicitly in our previous paper to prove the implication from CAP to convex ellipticity \cite[Proposition 5.3]{Kusakabe}.
In this paper, we give two more applications to Oka theory.
To state the first application, let us recall the following Oka properties.
Here, $\Op K$ denotes a non-specified open neighborhood of $K$.

\begin{definition}
\label{definition:BOP}
Let $Y$ be a complex manifold.
\begin{enumerate}[leftmargin=*]
\item $Y$ enjoys the \emph{Basic Oka Property with Approximation (BOPA)} if for any Stein space $X$, any compact $\cO(X)$-convex subset $K\subset X$ and any continuous map $f_{0}:X\to Y$ such that $f_{0}|_{\Op K}$ is holomorphic, there exists a homotopy $f_{t}:X\to Y$ $(t\in[0,1])$ such that $f_{t}|_{\Op K}$ is holomorphic and approximates $f_{0}$ uniformly on $K$ for each $t\in[0,1]$, and $f_{1}:X\to Y$ is holomorphic.
\item $Y$ enjoys the \emph{Basic Oka Property with Interpolation (BOPI)} if for any Stein space $X$, any closed complex subvariety $X'\subset X$ and any continuous map $f_{0}:X\to Y$ such that $f_{0}|_{X'}$ is holomorphic, there exists a homotopy $f_{t}:X\to Y$ $(t\in[0,1])$ such that $f_{t}|_{X'}=f_{0}|_{X'}$ for each $t\in[0,1]$, and $f_{1}:X\to Y$ is holomorphic.
\end{enumerate}
\end{definition}

It is one of the results in L\'arusson's paper \cite{Larusson2005} that BOPI implies BOPA.
The converse is also true by Forstneri\v{c}'s Oka principle (see \cite[\S5.15]{Forstneric2017}), and a complex manifold satisfying these equivalent conditions is called an \emph{Oka manifold}.
Compared with L\'{a}russon's proof, however, the proof of Forstneri\v{c}'s Oka principle is not easy.
In Section \ref{section:BOPA=>BOPI}, we give an elementary proof of the following fact by using Theorem \ref{theorem:implicit}.

\begin{corollary}
\label{corollary:BOPA=>BOPI}
For any complex manifold, $\BOPA$ implies $\BOPI$.
\end{corollary}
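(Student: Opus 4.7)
\smallskip
\noindent\textbf{Plan of proof.}
The plan is to iterate, on an exhaustion of $X$ by $\cO(X)$-convex compacta $K_{0} \Subset K_{1} \Subset \cdots$, a step that applies $\BOPA$ to approximate uniformly on $K_{j}$ and then uses Theorem~\ref{theorem:implicit} to repair the interpolation on $X'$ that is spoilt by the approximation. A Mittag-Leffler passage to the limit of the resulting sequence $f_{j}$ will give a holomorphic map agreeing with $f_{0}$ on $X'$, and the concatenation of the stage homotopies (each arranged to fix $X'$) will yield the homotopy required by $\BOPI$.

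As a preliminary I would extend $f_{0}|_{X'}$ to a holomorphic map $\tilde{f} : V \to Y$ on an open Stein neighbourhood $V$ of $X'$ (by a standard combination of Siu's Stein neighbourhood theorem with local extension in charts of $Y$), and modify $f_{0}$ inside $V$ so that it equals $\tilde{f}$ on a smaller neighbourhood of $X'$. Over the resulting holomorphic $\tilde{f}$ one has a dominating holomorphic spray $s_{\tilde{f}} : U \to Y$ on a neighbourhood $U$ of the zero section of some holomorphic vector bundle $E \to V$, obtained by assembling local tangent-direction sprays via Cartan's Theorem~B on the Stein source $V$.

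For the step $f_{j} \rightsquigarrow f_{j+1}$, I would apply $\BOPA$ on an $\cO(X)$-convex compact containing both $K_{j}$ and $\overline{V'}$ for some relatively compact open neighbourhood $V' \Subset V$ of $X' \cap K_{j+1}$, producing a globally holomorphic $g : X \to Y$ that uniformly $\varepsilon_{j}$-approximates $f_{j} = \tilde{f}$ on $\overline{V'}$. From the small difference $g - \tilde{f}$ (measured in local holomorphic charts on $Y$ and patched by Cartan) I would build a perturbation $s' : U \to Y$ of $s_{\tilde{f}}$ with $s'(0_{x}) = g(x)$ on $V$; for $\varepsilon_{j}$ small, $s'$ lies in the neighbourhood $\cV$ of $s_{\tilde{f}}$ furnished by Theorem~\ref{theorem:implicit} applied with $\Omega = V'$, and so there is a small holomorphic section $\xi = \varphi(s') \in \Gamma(V', U)$ with $s' \circ \xi = \tilde{f}|_{V'}$. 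Using the quantitative version of Cartan's Theorem~B for the ideal sheaf of $X'$ (after extending $E$ to a bundle $\widetilde{E}$ on $X$) I would extend $\xi|_{X'}$ to a global holomorphic section $\eta \in \Gamma(X, \widetilde{E})$ with $\eta|_{X'} = \xi|_{X'}$ and $\eta$ arbitrarily small on $K_{j}$. A smooth cutoff $\chi$ on $X$ equal to $1$ on an open neighbourhood $U_{1}$ of $X' \cup (K_{j+1} \cap V)$ and equal to $0$ outside a slightly larger open $U_{2} \subset V$ with transition zone $U_{2} \setminus \overline{U_{1}}$ disjoint from $K_{j+1}$ then allows me to define $f_{j+1} = s' \circ (\chi \eta)$ on $V$ and $f_{j+1} = g$ on $X \setminus \Supp \chi$; the two formulas agree on their overlap because $\chi = 0$ there. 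By construction $f_{j+1}$ is continuous, $f_{j+1}|_{X'} = s'(\xi)|_{X'} = \tilde{f}|_{X'} = f_{0}|_{X'}$, $f_{j+1}$ is holomorphic on $U_{1} \cup (X \setminus \overline{U_{2}})$, which contains an open neighbourhood of $K_{j+1} \cup X'$, and $f_{j+1}$ is uniformly close to $f_{j}$ on $K_{j}$ since $g$ is close to $f_{j}$ and $\eta$ is small there.

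The main obstacle I foresee is the joint bookkeeping for the cutoff, the Cartan extension and the exhaustion: because $f_{j+1}$ is only holomorphic where $\chi$ is locally constant, the nested neighbourhoods $U_{1} \Subset U_{2} \subset V$ must be chosen so that the transition zone of $\chi$ misses $K_{j+1}$, while the Cartan splitting must still deliver a useable norm estimate for $\eta$ on $K_{j}$ in terms of $\xi|_{X'}$. Theorem~\ref{theorem:implicit} is the decisive new ingredient: it upgrades the merely $\BOPA$-approximate equality $g \approx \tilde{f}$ on $V'$ into the exact equality $\tilde{f} = s' \circ \xi$, and hence makes it possible to restore interpolation on $X'$ without destroying the approximation already gained on $K_{j}$, replacing the hard Oka-principle argument by a straightforward fixed-point computation.
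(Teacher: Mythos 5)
Your overall strategy (exhaustion, $\BOPA$, Theorem~\ref{theorem:implicit}, Cartan--Oka--Weil to restore interpolation) is the right one, but as written the inductive step has a genuine gap concerning the \emph{noncompact} part of $X'$. Since your cutoff $\chi$ is $\equiv 1$ on a neighbourhood of all of $X'$, the definition $f_{j+1}=s'\circ(\chi\eta)$ forces you to need $s'\circ\eta=\tilde f$ on \emph{all} of $X'$. But Theorem~\ref{theorem:implicit} only produces the section $\xi=\varphi(s')$ over a relatively compact $\Omega=V'\Subset X$ (the Rouch\'e argument genuinely needs this), so ``$\xi|_{X'}$'' is only defined on $X'\cap V'$; a holomorphic extension of it to all of $X'$ need not exist, and even if some $\eta$ with $\eta|_{X'\cap V'}=\xi|_{X'\cap V'}$ were produced by Cartan, the identity $s'(\eta(x))=\tilde f(x)$ would fail for $x\in X'\setminus V'$ -- indeed no solution need exist there at all, because the core $g=s'(0_\cdot)$ is only close to $\tilde f$ on the compact set where $\BOPA$ approximates, and far out along $X'$ the value $\tilde f(x)$ need not even lie in $s'(U_x)$. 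Hence $f_{j+1}$ does not interpolate $f_0$ on $X'$ and the induction collapses. The paper avoids exactly this trap: it never asks for holomorphy or exact spray equations beyond a neighbourhood of the compact, $\cO(X)$-convex set $L_j=K_j\cup(X'\cap K_{j+1})$, and it extends the locally constructed homotopy to all of $X$ by a cutoff \emph{in the time parameter}, so that outside a compact set the map is simply left unchanged (hence still interpolating), rather than replaced by the non-interpolating $\BOPA$ output $g$.

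A second gap: $\BOPI$ demands a homotopy every time-slice of which fixes $X'$, whereas you apply $\BOPA$ to the map $f_j$ itself, so the intermediate $\BOPA$ homotopy moves $X'$ and you only repair its endpoint; the phrase ``each arranged to fix $X'$'' is not backed by the construction. The paper's decisive twist is to apply $\BOPA$ not to the map but to the \emph{spray} $s_{t_j}:X\times\C^N\to Y$ (a map from the Stein space $X\times\C^N$, holomorphic near $L_j\times\overline{\D^N}$), obtaining a homotopy of sprays $s_t$; then Theorem~\ref{theorem:implicit} (with its continuity in $s$) yields $\varphi_t$ with $s_t\circ(\id,\varphi_t)=f_{t_j}$ for \emph{every} $t$, and the parametric Cartan--Oka--Weil theorem yields $\widetilde\varphi_t$ with $\widetilde\varphi_t|_{X'}=\varphi_t|_{X'}$, so that every slice $f_t=s_t\circ(\id,\widetilde\varphi_t)$ interpolates. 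Two further repairs you would need even locally: your claim $f_j=\tilde f$ on $V'$ holds only for $j=0$ (after one step $f_j$ near $X'$ is $s'\circ(\chi\eta)$, not $\tilde f$), so the spray must be rebuilt at each stage over the current map via Lemma~\ref{lemma:local_spray} and the semiglobal extension theorem, as in the paper; and the compact you feed to $\BOPA$ must be $\cO(X)$-convex with $f_j$ holomorphic on a neighbourhood of it, which $K_j\cup\overline{V'}$ need not be -- the paper's choice $L_j$ is made precisely for this.
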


The same argument gives an elementary proof of the implication from POPA to POPI (Corollary \ref{corollary:POPA=>POPI}).
Recently, Kutzschebauch, L\'{a}russon and Schwarz \cite{Kutzschebauchc} introduced the equivariant version of Oka properties.
Our elementary proof is applicable also to the equivariant setting (see Remark \ref{remark:generalization}).

The second application concerns the Oka property of a blowup.
In Oka theory, it is important to understand when the blowup of an Oka manifold along a smooth center is Oka.
It is known that there exists a discrete set $D\subset\C^{2}$ such that the blowup $\Bl_{D}\C^{2}$ of $\C^{2}$ along $D$ is not Oka \cite[Example A.3]{Kusakabe2019}.
On the other hand, L\'{a}russon and Truong proved that the blowup of a manifold of Class $\cA$ (e.g. $\C^{n}$) along a smooth algebraic center is Oka \cite[Main Theorem]{Larusson2017} (see also \cite{Kaliman2018}).
A manifold of Class $\cA$ is known to be algebraically Oka.
To state the definition of algebraically Oka manifolds, recall that a complex manifold $Y$ is Oka if and only if $Y$ satisfies Gromov's condition $\Ell_{1}$, i.e. for any holomorphic map $f:X\to Y$ from a Stein manifold there exists a dominating global spray over $f$ \cite{Kusakabe2019,Kusakabe}.
Analogously, an \emph{algebraically Oka manifold} is defined to be an algebraic manifold $Y$ satisfying the algebraic version of $\Ell_{1}$, i.e. for any regular map $f:X\to Y$ from an affine manifold there exists an algebraic dominating global spray over $f$ (cf. \cite{Larusson2019}).
In Section \ref{section:blowup}, we generalize the result of L\'{a}russon and Truong as follows.

\begin{corollary}
\label{corollary:blowup}
Let $Y$ be an algebraically Oka manifold and $A\subset Y$ be a closed algebraic submanifold.
Then the blowup $\Bl_{A}Y$ of $Y$ along $A$ is Oka.
\end{corollary}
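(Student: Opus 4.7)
\emph{Proof plan.} By the characterization of Oka manifolds via Gromov's condition $\Ell_{1}$ cited in the paper, the task reduces to constructing a dominating global holomorphic spray over every holomorphic map $f : X \to \Bl_{A}Y$ from a Stein manifold $X$. Write $\pi : \Bl_{A}Y \to Y$ for the blowdown and $g := \pi \circ f$. Since the algebraic Oka property of $Y$ in particular implies that $Y$ is Oka, there is a holomorphic dominating spray $\sigma : U \to Y$ over $g$ defined on a neighborhood $U$ of the zero section of a holomorphic vector bundle $g^{*}E \to X$; I would take $\sigma$ to arise by pulling back via $g$ an algebraic dominating spray furnished by the algebraic Oka structure of $Y$. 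The central task is then to promote $\sigma$ to a dominating spray over $f$ with values in $\Bl_{A}Y$.

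Outside the closed analytic set $Z := \sigma^{-1}(A) \subset U$, the lift $\pi^{-1}\circ\sigma$ is canonical and holomorphic. I plan to extend across $Z$ by exploiting the algebraicity of $A$: by the universal property of the blowup, after an algebraic modification of $U$ along $Z$ (iterated blowups, as needed, until the preimage of $A$ becomes a Cartier divisor), the map $\sigma$ factors through $\pi$, producing a holomorphic map $\tilde{\sigma} : W \to \Bl_{A}Y$ whose restriction to the strict transform of the zero section agrees with $f$. Domination in the fiber directions of the exceptional divisor $E_{A} \cong \P(N_{A/Y})$ would be arranged by combining $\tilde{\sigma}$ with fiberwise algebraic sprays on this projective bundle over $A$, which are available because projective bundles inherit strong algebraic Oka-type structure from the algebraic Oka property of the base.

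Since $W$ is in general not a vector bundle, $\tilde{\sigma}$ is not literally a spray in the sense of Definition \ref{definition:spray}. This is where Theorem \ref{theorem:implicit} enters: applied to $\sigma$ over $g$, it yields, for any relatively compact $\Omega \Subset X$, a neighborhood $\cV \subset \cO(U, Y)$ of $\sigma$ and a continuous inversion $\varphi : \cV \to \Gamma(\Omega, U)$ with $s \circ \varphi(s) = g|_{\Omega}$. Feeding a holomorphic approximation of $\pi \circ \tilde{\sigma}$ into $\varphi$ produces holomorphic sections on $\Omega$ which, combined with $\tilde{\sigma}$, constitute a genuine holomorphic dominating spray over $f|_{\Omega}$ on a vector bundle; a standard Stein exhaustion and patching argument then assembles these into a global dominating spray over $f$. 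The main obstacle is controlling the algebraic modification across the exceptional divisor well enough that the resulting candidate $\pi\circ\tilde{\sigma}$ is close, in the compact-open topology on $\cO(U, Y)$, to the original spray $\sigma$. The algebraicity of $A$ is what makes this possible: it simultaneously provides the universal property enabling the blowup modification and the analytic control that allows Theorem \ref{theorem:implicit} to convert the candidate into a true spray over $f$.
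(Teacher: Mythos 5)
There is a genuine gap, and it sits exactly where the paper has to work hardest. Your plan is to verify $\Ell_{1}$ for $\Bl_{A}Y$ directly: lift a dominating spray $\sigma$ over $g=\pi\circ f$ to the blowup by modifying its domain $U$ until the ideal of $\sigma^{-1}(A)$ becomes invertible, and then repair the fact that the resulting $\tilde\sigma\colon W\to\Bl_{A}Y$ is not a spray by invoking Theorem \ref{theorem:implicit}. But Theorem \ref{theorem:implicit} cannot do this job: all it provides is, for maps $s$ close to a given dominating spray, a section $\varphi(s)$ with $s\circ\varphi(s)=f|_{\Omega}$, i.e.\ a local inversion along the zero section. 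It does not turn a map defined on a modification $W$ (which is not a vector bundle, and may be badly singular, since $\sigma^{-1}(A)$ is merely an analytic set over the Stein base) into a spray, and it contributes nothing toward domination in the directions of the exceptional divisor. That domination is the crux of the whole problem --- it is precisely what L\'{a}russon--Truong's construction of algebraic sprays on blowups of affine spaces supplies --- and in your sketch it is only asserted (``fiberwise algebraic sprays on the projective bundle''), not constructed. Moreover, $\Ell_{1}$ requires a \emph{global} dominating spray over $f$; the concluding ``Stein exhaustion and patching'' step is not available, since gluing sprays is exactly the difficulty Oka theory is built to circumvent. A further wrinkle: algebraic $\Ell_{1}$ of $Y$ applies to regular maps from affine manifolds, so it does not directly give an algebraic spray over the holomorphic map $g$; and even pulling back an algebraic spray over $\id_{Y}$, the preimage of $A$ in $U$ is only analytic, so no known algebraic result applies to the blowup of $U$ along it.

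The paper avoids all of this by not proving $\Ell_{1}$ for $\Bl_{A}Y$ at all; it proves $\CAP$ (Corollary \ref{corollary:aCAP}), which suffices by Theorem \ref{theorem:CAP}. Given $f\colon\Op K\to\Bl_{A}Y$ with $K\subset\C^{n}$ compact convex, one takes a dominating local spray over $\pi\circ f$ (Lemma \ref{lemma:local_spray}), approximates it by a \emph{regular} map $s\colon\C^{n+N}\to Y$ using $\aCAP$ of $Y$, and then uses Theorem \ref{theorem:implicit} in its intended role: to write $\pi\circ f=s\circ\varphi$ with $\varphi$ a holomorphic section close to the zero section. Now $s^{-1}(A)$ is an algebraic subvariety of affine space, and off a codimension-two bad set the pullback $s^{*}\Bl_{A}Y$ is the blowup of an open subset of $\C^{n+N}$ along an algebraic center that is smooth at its non-Cartier points; the L\'{a}russon--Truong theorem makes this blowup algebraically Oka, hence $\aCAP$, so the lift of $f$ can be approximated by a regular map and pushed back down to $\Bl_{A}Y$. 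The step you are missing is this algebraization of the spray: without replacing the holomorphic spray by a regular one before taking preimages of $A$, there is no engine (no analogue of L\'{a}russon--Truong in the analytic category) to produce the sprays dominating along the exceptional divisor that your outline presupposes.
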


In the appendix, a few results on equivariant Oka theory are given.
More precisely, we give the equivariant versions of the characterization of Oka manifolds by $\Ell_{1}$ and the localization principle for Oka manifolds (Corollary \ref{corollary:characterization} and Theorem \ref{theorem:equivariant_localization}).
As an application of the latter, we prove that every smooth toric variety with its torus action is equivariantly Oka (Example \ref{example:toric}).

%
%

\section{Proof of Theorem \ref{theorem:implicit}}
\label{section:proof}

In order to prove Theorem \ref{theorem:implicit}, let us recall Rouch\'{e}'s theorem (see \cite[\S 10.2]{Chirka1989} for the definition of the multiplicity $\mu_{a}(f)$ of a holomorphic map).

\begin{theorem}[{cf. \cite[p.\,110]{Chirka1989}}]
\label{theorem:rouche}
Let $U\Subset\C^{N}$ be a bounded domain, $f,g:\overline U\to\C^{N}$ be continuous maps which are holomorphic on $U$ and satisfy $|f-g|<|f|$  on $\partial U$.
Then the numbers of zeroes of $f$ and $g$ in $U$ (counted with multiplicities) are equal:
\begin{align*}
\sum_{f(a)=0}\mu_{a}(f|_{U})=\sum_{g(b)=0}\mu_{b}(g|_{U}).
\end{align*}
\end{theorem}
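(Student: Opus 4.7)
The plan is to realize the count of zeros as a topological degree on $\partial U$, and then join $f$ to $g$ by a straight-line homotopy that remains nonvanishing on $\partial U$. I would first verify this last fact: for $t \in [0,1]$ and $h_t := (1-t)f + tg : \overline U \to \C^N$, the triangle inequality and the hypothesis $|f-g| < |f|$ on $\partial U$ give
\[
|h_t| \ge |f| - t|g-f| \ge |f| - |g-f| > 0 \quad \text{on } \partial U.
\]
Hence each $h_t$ is continuous on $\overline U$, holomorphic on $U$, and has zero set contained in a compact subset of $U$. Because $h_t$ is nonvanishing on $\partial U$, it cannot vanish identically on any connected component $V$ of $U$ (otherwise some point of $\partial V \subset \partial U$ would be a zero), so its zero set in $U$ is finite and the multiplicities $\mu_a(h_t|_U)$ are well defined.

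Second, I would invoke the integral/degree representation of the multiplicity count, which is the substantive content of Chirka's discussion around p.~110. For any continuous map $h:\overline U \to \C^N$ that is holomorphic on $U$ and nowhere zero on $\partial U$, the total count $\sum_{h(a) = 0} \mu_a(h|_U)$ equals the Brouwer degree of $z \mapsto h(z)/|h(z)|$ from $\partial U$ to $S^{2N-1}$; equivalently, it equals
\[
\frac{(N-1)!}{(2\pi i)^N}\int_{\partial U} h^{*}\omega,
\]
where $\omega$ is the Bochner-Martinelli form on $\C^N \setminus \{0\}$, a closed $(2N-1)$-form representing the generator of $H^{2N-1}(\C^N \setminus \{0\})$.

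Third, I would apply homotopy invariance. Since $\omega$ is closed on $\C^N \setminus \{0\}$ and $t \mapsto h_t|_{\partial U}$ is a continuous homotopy into $\C^N \setminus \{0\}$, the integral $\int_{\partial U} h_t^{*}\omega$ is independent of $t$. Evaluating at $t=0$ and $t=1$ produces the claimed equality of multiplicity counts for $f$ and $g$.

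The main obstacle is not the homotopy step, which is formal once the degree formula is granted, but the input formula of Step 2 equating the analytically defined multiplicity $\mu_a$ with the local topological degree of $h$ at $a$. To make the argument self-contained, I would prove this formula by reducing to the case where $h$ is a local biholomorphism at every zero: then every $\mu_a = 1$ and the local topological degree is $+1$ by orientation preservation, so the formula reduces to Stokes' theorem on $U$ with small balls around the zeros excised. For general $h$, a small generic perturbation splits each zero of multiplicity $\mu_a$ into $\mu_a$ simple zeros without changing either side (using the homotopy step on $\partial U$ for the boundary integral, and conservation of total intersection number for the analytic count), completing the reduction.
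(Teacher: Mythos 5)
The paper itself gives no proof of this theorem: it is imported directly from Chirka (p.~110), so there is no internal argument to compare against. Your strategy --- identify the total multiplicity with a topological degree, then exploit homotopy invariance along the segment $h_t=(1-t)f+tg$, which your estimate correctly shows is zero-free on $\partial U$ --- is the standard degree-theoretic proof underlying Chirka's treatment, so the route is the right one. But as written, your Step 2 has a genuine gap: $U$ is only assumed to be a bounded \emph{domain}, so $\partial U$ need not be a topological manifold, let alone smooth or rectifiable. The ``Brouwer degree of $h/|h|:\partial U\to S^{2N-1}$'' is then undefined, and the Bochner--Martinelli integral $\int_{\partial U}h^{*}\omega$ fails twice over: once because $\partial U$ may support no usable integration theory, and once because $h$ is merely \emph{continuous} on $\overline U$ (holomorphic only in the open set $U$), so the pullback $h^{*}\omega$ does not exist on $\partial U$ even when $\partial U$ is smooth. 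The same defect infects your Step 3, which integrates $h_t^{*}\omega$ over $\partial U$.

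Two standard repairs are available. (i) Replace the boundary-sphere degree by the classical Brouwer degree $\deg(h,U,0)$, which is defined for any continuous $h:\overline U\to\R^{2N}$ with $U$ bounded open and $0\notin h(\partial U)$, is homotopy invariant, and is additive over isolated zeros; this reduces everything to your final claim that multiplicity equals local degree, which your perturbation argument (subtract a generic small regular value $w$, splitting a zero of multiplicity $\mu_a$ into $\mu_a$ simple zeros) handles, at least if $\mu_a$ is defined as in Chirka via the local sheet number --- with other definitions of $\mu_a$ you must first check the two agree to avoid circularity. (ii) Shrink the domain: since $(t,z)\mapsto h_t(z)$ is continuous on the compact set $[0,1]\times\overline U$ and $\min_{[0,1]\times\partial U}|h_t|>0$, the zeros of all the $h_t$ lie in one fixed compact $C\subset U$; choose a smoothly bounded $U'$ with $C\subset U'\Subset U$, where every $h_t$ is holomorphic, hence smooth, near $\partial U'$, and run your integral argument on $\partial U'$ instead of $\partial U$. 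Separately, your finiteness claim needs one more word: non-vanishing on components does not by itself prevent a positive-dimensional zero locus for a map into $\C^{N}$; what yields finiteness is that the zero set is a \emph{compact analytic} subset of $U$, and compact analytic subsets of $\C^{N}$ are finite by the maximum principle. With these emendations your proof is correct.
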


\begin{proof}[Proof of Theorem \ref{theorem:implicit}]
By the definition of a spray, $U$ is an open neighborhood of the zero section of a holomorphic vector bundle $p:E\to X$.
Let $E'$ denote the holomorphic vector subbundle of $E$ with fibers
\begin{align*}
E'_x=\ker(\der(s_{f}|_{U_{x}})_{0_{x}}:E_{x}=\T_{0_{x}}U_{x}\to\T_{f(x)}Y),\ x\in X.
\end{align*}
Since $X$ is Stein, there exists another holomorphic vector subbundle $E''$ of $E$ such that $E=E'\oplus E''$ (cf. \cite[Corollary 2.6.6]{Forstneric2017}).
Then the map $(p,s_{f})|_{U\cap E''}:U\cap E''\to X\times Y$ restricts to a biholomorphic map from an open neighborhood $U''\subset U\cap E''$ of the zero section onto an open neighborhood of the graph of $f$.
Since $\Omega\Subset X$ is relatively compact, we may assume that $U''|_{\Omega}\subset U$ is also relatively compact.
Note that if a spray $s:U\to Y$ is sufficiently close to $s_{f}$, the restriction $s|_{U''|_{\Omega}}$ is uniformly close to $s_{f}|_{U''|_{\Omega}}$.
Then by using Rouch\'{e}'s theorem (Theorem \ref{theorem:rouche}) on each fiber of $p|_{U''|_{\Omega}}:U''|_{\Omega}\to\Omega$, we can conclude that there exists a unique holomorphic section $\varphi(s):\Omega\to U''$ which is close to the zero section and satisfies $s\circ\varphi(s)=f|_{\Omega}$.
By construction, $\varphi(s)$ depends on $s$ continuously and satisfies $\varphi(s_{f})=0$.
\end{proof}

\begin{remark}
\label{remark:implicit}
The same proof also gives the relative version (for relative sprays \cite[Definition 2.1]{Kusakabe}) and the parametric version of Theorem \ref{theorem:implicit} (use the arguments in \cite[p.\,254]{Forstneric2017}).
\end{remark}

%
%

\section{Elementary proof of that approximation implies interpolation}
\label{section:BOPA=>BOPI}

Let us recall the following fact which ensures the existence of dominating local sprays.
It is an easy application of Siu's theorem \cite{Siu1976} and Cartan's Theorem A.
Here, a compact subset of a complex space is called a \emph{Stein compact} if it admits a basis of open Stein neighborhoods.

\begin{lemma}[{cf. \cite[Lemma 5.10.4]{Forstneric2017}}]
\label{lemma:local_spray}
Let $K$ be a Stein compact in a complex space and $Y$ be a complex manifold.
Then for any holomorphic map $f:\Op K\to Y$ there exist an open neighborhood $W\subset\C^{N}$ of $0$ and a dominating spray $\Op K\times W\to Y$ over $f$.
\end{lemma}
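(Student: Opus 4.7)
The plan is to build the spray by flowing the points $(x,f(x))$ along finitely many holomorphic vector fields in the $Y$-direction, defined on a Stein ambient neighborhood of the graph of $f$; the spray parameters will be the flow times, and domination will follow because the vector fields span the $Y$-tangent space at every point of the graph.

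First I would shrink $\Op K$ to an open Stein neighborhood $U\subset\Op K$ of $K$ on which $f:U\to Y$ is defined, and consider the graph $\Gamma_{f}\subset U\times Y$. Being biholomorphic to $U$, it is a closed Stein subvariety. Since $Y$ need not be Stein, neither is $U\times Y$, so sheaf-theoretic machinery is not directly available on the ambient space. This is where Siu's theorem \cite{Siu1976} enters: it supplies an open Stein neighborhood $\Omega\subset U\times Y$ of $\Gamma_{f}$. On $\Omega$ the pullback $\pi_{Y}^{*}\T Y$ of the tangent bundle of $Y$ along the second projection $\pi_{Y}:\Omega\to Y$ is a holomorphic vector bundle, and Cartan's Theorem A produces global sections $V_{1},\dots,V_{N}\in\Gamma(\Omega,\pi_{Y}^{*}\T Y)$ whose values span the fibers over every point of a neighborhood of $\Gamma_{f}|_{K}$ (finitely many suffice by compactness of $\Gamma_{f}|_{K}$).

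I would then view the $V_{i}$ as holomorphic vector fields on $\Omega$ tangent to the fibers of the first projection, so that their flows preserve the $U$-coordinate, and set
\[
s(x,t_{1},\dots,t_{N})=\pi_{Y}\bigl(\phi^{V_{1}}_{t_{1}}\circ\cdots\circ\phi^{V_{N}}_{t_{N}}(x,f(x))\bigr),
\]
defined on $\Op K\times W$ for a small polydisc $W\subset\C^{N}$ around $0$, sized by a standard compactness argument so that the composed flow exists. Then $s(x,0)=f(x)$ and $\partial s/\partial t_{i}|_{t=0}=V_{i}(x,f(x))$, so $s$ is a dominating spray over $f$.

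The principal obstacle is the very first step: $U\times Y$ fails to be Stein in general, which blocks any naive attempt to invoke Cartan's Theorem A on the ambient space to manufacture the vector fields $V_{i}$ along the graph of $f$. Siu's theorem is exactly the device that repairs this by handing us a Stein open neighborhood of the graph. The subsequent steps---Cartan A on the vertical tangent bundle and the uniform-in-$K$ existence of the composed flow---are routine.
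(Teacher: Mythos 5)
Your construction—Siu's theorem to get a Stein neighborhood of the graph of $f$, Cartan's Theorem A to produce finitely many sections of the vertical tangent bundle $\pi_{Y}^{*}\T Y$ spanning it along the compact graph over $K$, and composition of their fiberwise flows as the spray—is correct and is exactly the argument the paper points to (it gives no proof of its own, citing \cite[Lemma 5.10.4]{Forstneric2017} and describing the lemma as an easy application of Siu's theorem and Cartan's Theorem A).
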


\begin{proof}[Proof of Corollary \ref{corollary:BOPA=>BOPI}]
Assume that a complex manifold $Y$ enjoys $\BOPA$.
Let $X$, $X'$ and $f_{0}$ be as in the definition of BOPI (Definition \ref{definition:BOP}).
Since $X$ is Stein, there exists an exhausting sequence
\begin{align*}
\emptyset=K_0\subset K_1\subset K_2\subset\cdots\subset\bigcup_{j\in\N} K_j=X,\quad K_j\subset K_{j+1}^\circ
\end{align*}
of compact $\cO(X)$-convex subsets.
Set $t_j=1-2^{-j}$ for each $j\in\N\cup\{0\}$, and take a complete distance function $d$ on $Y$ and a positive number $\varepsilon>0$.
For each $j\in\N$, we shall inductively construct homotopy $f_{t}:X\to Y$ $(t\in[t_{j-1},t_{j}])$ such that $f_{t_{j}}|_{\Op K_{j}}$ is holomorphic and the following hold for each $t\in[t_{j-1},t_{j}]$;
\begin{itemize}
	\item $f_{t}|_{X'}=f_{0}|_{X'}$,
	\item $f_{t}|_{\Op K_{j-1}}$ is holomorphic and
	\begin{align*}
		\sup_{x\in K_{j-1}}d(f_{t}(x), f_{t_{j-1}}(x))<\frac{\varepsilon}{2^{j}}.
	\end{align*}
\end{itemize}
Then $f_1=\lim_{t\to 1}f_t:X\to Y$ exists and the homotopy $f_t$ $(t\in[0,1])$ has the desired properties.
Note that in each induction step it suffices to construct $f_{t}:\Op K_{j}\to Y$ $(t\in[t_{j-1},t_{j}])$ with the above properties since we can extend it as $f_{t_{j-1}+(t-t_{j-1})\chi(x)}(x)$ by using a continuous function $\chi:X\to[0,1]$ such that $\chi|_{\Op K_{j}}\equiv 1$ and $\Supp\chi$ is contained in a small open neighborhood of $K_{j}$.

Assume that we already have $f_{t}:X\to Y$ $(t\in[t_{j-1},t_{j}])$ for some $j\in\N\cup\{0\}$ where $t_{-1}=0$ and $K_{-1}=\emptyset$.
Set $L_{j}=K_j\cup(X'\cap K_{j+1})$ and note that it is $\cO(X)$-convex.
By using the semiglobal holomorphic extension theorem (cf. \cite[Theorem 3.4.1]{Forstneric2017}), after deforming $f_{t_j}$ if necessary, we may assume that $f_{t_j}|_{\Op L_{j}}$ is holomorphic.
Then by Lemma \ref{lemma:local_spray} there exists a dominating local spray $s_{t_{j}}:\Op L_{j}\times\Op\overline{\D^{N}}\to Y$ over $f_{t_{j}}|_{\Op L_{j}}$.
By using a retraction of $\C^N$ onto $\overline{\D^N}$ and a continuous function $\chi':X\to[0,1]$ such that $\chi'|_{\Op L_{j}}\equiv 1$ and $\Supp\chi'$ is contained in a small open neighborhood of $L_{j}$, we may assume that $s_{t_{j}}$ is extended to $X\times\C^N\to Y$ continuously so that $s_{t_{j}}(\cdot,0)=f_{t_{j}}$.
By BOPA of $Y$, there exists a homotopy $s_{t}:\Op K_{j+1}\times\C^N\to Y$ $(t\in[t_{j},t_{j+1}])$ such that $s_{t}|_{\Op L_{j}\times\Op\overline{\D^N}}$ is holomorphic and approximates $s_{t_{j}}$ on $\Op L_{j}\times\Op\overline{\D^N}$ for each $t\in[t_{j},t_{j+1}]$, and $s_{t_{j+1}}$ is holomorphic.
Then Theorem \ref{theorem:implicit} implies the existence of a homotopy of holomorphic maps $\varphi_{t}:\Op L_{j}\to\C^{N}$ $(t\in[t_{j},t_{j+1}])$ such that $\varphi_{t_{j}}\equiv0$ and $s_{t}\circ(\id,\varphi_{t})=f_{t_{j}}|_{\Op L_{j}}$ for each $t\in[t_{j},t_{j+1}]$.
We can extend this homotopy to $\varphi_{t}:\Op K_{j+1}\to\C^N$ continuously so that $\varphi_{t_{j}}\equiv0$.
Then by the parametric Cartan--Oka--Weil theorem (cf. \cite[Theorem 2.8.4]{Forstneric2017}), there exists a homotopy of holomorphic maps $\widetilde\varphi_{t}:\Op K_{j+1}\to\C^N$ $(t\in[t_{j},t_{j+1}])$ such that $\widetilde\varphi_{t_{j}}\equiv0$ and the following hold for each $t\in[t_{j},t_{j+1}]$;
\begin{itemize}
\item $\widetilde\varphi_{t}|_{X'\cap\Op K_{j+1}}=\varphi_{t}|_{X'\cap\Op K_{j+1}}$, and
\item $\widetilde\varphi_{t}|_{\Op K_{j}}$ approximates $\varphi_{t}$ uniformly on $K_{j}$.
\end{itemize}
Then the homotopy $f_t=s_t\circ(\id,\widetilde\varphi_t):\Op K_{j+1}\to Y$ $(t\in[t_j,t_{j+1}])$ has the desired properties.
\end{proof}

\begin{remark}
\label{remark:generalization}
(1) By taking a nontrivial compact $\cO(X)$-convex subset $K_{0}\subset X$ and a small number $\varepsilon>0$, we can see that the above argument also gives an elementary proof of that $\BOPA$ implies $\BOPAI$ (see \cite[\S5.15]{Forstneric2017} for the definition of $\BOPAI$).
\\
(2) The above proof is applicable also to the equivariant setting (see \cite{Kutzschebauchc} for the definition of the equivariant Oka properties).
Namely, it gives an elementary proof of that $G$-BOPA implies $G$-BOPI for a finite group $G$ (cf. \cite[Corollary 4.2]{Kutzschebauchc}).
\\
(3) L\'{a}russon's proof of that interpolation implies approximation \cite{Larusson2005} works even if $Y$ is a singular complex space (see \cite[Proposition 1.5]{Larkang2016}).
However, our proof of the converse is not applicable since we do not have a good definition of dominating sprays for singular targets.
\end{remark}

It is known that the parametric Oka properties $\POPA$ and $\POPI$ are also equivalent (see \cite[\S5.15]{Forstneric2017}).
The implication from $\POPI$ to $\POPA$ has an elementary proof due to L\'{a}russon \cite{Larusson2005}.
The parametric version of Theorem \ref{theorem:implicit} (Remark \ref{remark:implicit}) and the above argument yield an elementary proof of the converse.
Since the proof is almost the same, we omit the proof here.

\begin{corollary}
\label{corollary:POPA=>POPI}
For any complex manifold, $\POPA$ implies $\POPI$.
\end{corollary}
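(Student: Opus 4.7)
The plan is to replay the argument of Corollary \ref{corollary:BOPA=>BOPI} with every ingredient upgraded to its parametric counterpart. I would fix a compact Hausdorff pair $P_{0}\subset P$, a Stein space $X$, a closed complex subvariety $X'\subset X$, and a continuous map $f_{0}:P\times X\to Y$ such that $f_{0,p}$ is holomorphic whenever $p\in P_{0}$ and $f_{0,p}|_{X'}$ is holomorphic for every $p\in P$. The goal is to produce a homotopy $f_{t}:P\times X\to Y$ $(t\in[0,1])$ through such families, fixed on $(P_{0}\times X)\cup(P\times X')$, with $f_{1,p}$ holomorphic for every $p\in P$.

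I would exhaust $X$ by compact $\cO(X)$-convex subsets $\emptyset=K_{0}\subset K_{1}\subset\cdots$ with $K_{j}\subset K_{j+1}^{\circ}$, set $t_{j}=1-2^{-j}$ and $L_{j}=K_{j}\cup(X'\cap K_{j+1})$, and build $f_{t}$ inductively over the intervals $[t_{j-1},t_{j}]$ by running the same five moves at each stage. First, use the parametric semiglobal extension theorem to arrange $f_{t_{j},p}|_{\Op L_{j}}$ to be holomorphic in $p$. Next, a parametric version of Lemma \ref{lemma:local_spray} yields a continuous family of dominating local sprays $s_{t_{j},p}:\Op L_{j}\times\Op\overline{\D^{N}}\to Y$ over $f_{t_{j},p}|_{\Op L_{j}}$, which then extends continuously to $X\times\C^{N}$. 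Applying $\POPA$ of $Y$ to this family produces a homotopy $s_{t,p}:\Op K_{j+1}\times\C^{N}\to Y$ $(t\in[t_{j},t_{j+1}])$ that approximates $s_{t_{j},p}$ on $\Op L_{j}\times\Op\overline{\D^{N}}$ uniformly in $p$ and is holomorphic at $t=t_{j+1}$.

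The parametric form of Theorem \ref{theorem:implicit} (Remark \ref{remark:implicit}) would then supply, for a sufficiently fine approximation, a continuous family of holomorphic sections $\varphi_{t,p}:\Op L_{j}\to\C^{N}$ with $\varphi_{t_{j},p}\equiv 0$ and $s_{t,p}(\cdot,\varphi_{t,p}(\cdot))=f_{t_{j},p}|_{\Op L_{j}}$. After extending $\varphi_{t,p}$ continuously to $\Op K_{j+1}$ with $\varphi_{t_{j},p}\equiv 0$, the parametric Cartan--Oka--Weil theorem would produce a nearby holomorphic family $\widetilde\varphi_{t,p}$ that agrees with $\varphi_{t,p}$ on $X'\cap\Op K_{j+1}$ and on the subparameter where holomorphy was already in force, and that approximates $\varphi_{t,p}$ uniformly on $K_{j}$. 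Setting $f_{t,p}=s_{t,p}(\cdot,\widetilde\varphi_{t,p}(\cdot))$ would complete the inductive step, and passing to the limit $t\to 1$ would yield the desired homotopy.

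The main obstacle is entirely administrative: one must verify that every tool used in the non-parametric argument admits a continuous-parameter refinement that additionally keeps the constructions fixed on $P_{0}$. Remark \ref{remark:implicit} furnishes exactly this for Theorem \ref{theorem:implicit}; the parametric semiglobal extension theorem, the parametric form of Lemma \ref{lemma:local_spray}, and the parametric Cartan--Oka--Weil theorem are all classical and can be extracted from \cite{Forstneric2017}. No conceptually new step beyond Corollary \ref{corollary:BOPA=>BOPI} arises.
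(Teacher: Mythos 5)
Your proposal is correct and follows exactly the route the paper intends: the paper itself omits the proof of Corollary \ref{corollary:POPA=>POPI}, stating that it is the argument of Corollary \ref{corollary:BOPA=>BOPI} run with the parametric version of Theorem \ref{theorem:implicit} from Remark \ref{remark:implicit}, which is precisely your plan of upgrading each tool (semiglobal extension, Lemma \ref{lemma:local_spray}, Cartan--Oka--Weil, and the implicit function theorem) to its parametric form while keeping everything fixed over $P_{0}$ and on $X'$. No gap beyond the administrative checks you already flag.
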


The Oka properties are generalized to the relative setting and it is known that approximation and interpolation are equivalent also for a holomorphic submersion (see \cite[\S 5.1]{Kusakabe}).
The straightforward generalization of our argument gives an elementary proof also in this relative setting (see Remark \ref{remark:implicit}).

%
%

\section{Blowups of algebraically Oka manifolds}
\label{section:blowup}

We first recall the Convex Approximation Property (CAP) and its algebraic version aCAP.

\begin{definition}
\label{definition:CAP}
A complex manifold (resp. an algebraic manifold) $Y$ enjoys \emph{CAP} (resp. \emph{aCAP}) if any holomorphic map from an open neighborhood of a compact convex set $K\subset\C^{n}$ $(n\in\N)$ to $Y$ can be uniformly approximated on $K$ by holomorphic maps (resp. regular maps) $\C^{n}\to Y$.
\end{definition}

The following facts relate the above properties to the Oka properties.

\begin{theorem}[{cf. \cite[\S 5.15 and Corollary 6.15.2]{Forstneric2017}, \cite[Theorem 1]{Larusson2019}}]
\label{theorem:CAP}
\ \\
(1) A complex manifold enjoys $\CAP$ if and only if it is Oka.
\\
(2) An algebraic manifold enjoys $\aCAP$ if it is algebraically Oka.
\end{theorem}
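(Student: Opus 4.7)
\textbf{Proof Plan for Theorem \ref{theorem:CAP}.}

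\emph{Part (1), direction Oka $\Rightarrow$ CAP.} This direction is essentially a special case of BOPA. Given $K\subset\C^{n}$ compact convex and $f\colon\Op K\to Y$ holomorphic, I would use that $K$ is $\cO(\C^{n})$-convex and contractible to extend $f$ continuously to $f_{0}\colon\C^{n}\to Y$; this is possible because $K$ retracts to a point and both $K$ and $\C^{n}$ are contractible, so the homotopy-theoretic obstruction vanishes. Applying BOPA then produces a homotopy terminating at a holomorphic map $f_{1}\colon\C^{n}\to Y$ that approximates $f$ uniformly on $K$, which is exactly CAP.

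\emph{Part (1), direction CAP $\Rightarrow$ Oka.} This is the deep part (Forstneri\v{c}'s Oka principle) and would occupy the bulk of the proof. The plan is to verify BOPA for a Stein space $X$, an $\cO(X)$-convex compact $K$, and a continuous map $f_{0}\colon X\to Y$ holomorphic near $K$. First I would exhaust $X$ by a sequence of strongly pseudoconvex sublevel sets of an exhaustion function and reduce the global statement to two basic inductive steps: the \emph{non-critical case} (crossing a strongly pseudoconvex bump) and the \emph{critical case} (crossing a Morse index change). In the non-critical case, one adds a small convex bump in a local holomorphic chart; by choosing the chart so that the bump and its overlap with the previous domain biholomorphically identify with compact convex sets in $\C^{n}$, CAP furnishes a holomorphic approximation of the old map on the bump. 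A splitting lemma (solving a multiplicative Cousin-type problem for maps into $Y$, via the implicit function theorem for sprays when $Y$ is a submersion target, but reduced here to the linear case on the overlap) then glues the two approximants into a single holomorphic map on the enlarged domain. The critical case is handled by attaching the stable disk of the Morse critical point and again reducing to CAP via a convex model. Iterating along the exhaustion and controlling errors geometrically yields the desired homotopy $f_{t}$.

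\emph{Part (2), algebraically Oka $\Rightarrow$ aCAP.} Given $f\colon\Op K\to Y$ holomorphic with $K\subset\C^{n}$ compact convex, the strategy is to factor $f$ through an algebraic dominating spray and then approximate the factoring section by a polynomial. Concretely, I would use algebraic $\Ell_{1}$ applied to a suitably chosen regular map $g\colon\C^{n}\to Y$ to produce an algebraic dominating global spray $s\colon\C^{n}\times\C^{M}\to Y$. Then the implicit function theorem (Theorem \ref{theorem:implicit}) provides a holomorphic section $\sigma\colon\Op K\to\C^{M}$ with $s\circ(\id,\sigma)=f$, provided $f$ is close enough to $g$ on $K$ for the fiberwise Rouch\'{e} argument to apply. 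Since $K$ is convex, $\sigma$ can be uniformly approximated on $K$ by polynomial maps $\widetilde\sigma\colon\C^{n}\to\C^{M}$ via the classical Oka--Weil theorem, and $s\circ(\id,\widetilde\sigma)\colon\C^{n}\to Y$ is then a regular map approximating $f$ on $K$ by continuity of $s$.

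\emph{Main obstacle.} Besides the general framework of Forstneri\v{c}'s Oka principle in Part (1), the subtle point in Part (2) is the \emph{closeness hypothesis} needed to apply Theorem \ref{theorem:implicit}: an arbitrary $f$ is not close to a single regular $g$. To overcome this, I would construct an \emph{iterated} algebraic spray of the form $s_{L}\circ\cdots\circ s_{1}$ by repeatedly applying algebraic $\Ell_{1}$; the iterates have progressively larger image, and using the compactness of $K$ together with the contractibility of $K$ (so that $f(\Op K)$ is connected and can be reached from a reference regular map by a chain of spray translates) one shows that a sufficiently long iterated spray is surjective onto a neighborhood of $f(K)$. The implicit function theorem is then applied to this composite spray, reducing everything to polynomial approximation of a single holomorphic section as outlined above.
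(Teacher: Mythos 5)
First, note that the paper itself offers no proof of Theorem \ref{theorem:CAP}: it is quoted from the literature (the ``cf.'' citations to \cite[\S 5.15 and Corollary 6.15.2]{Forstneric2017} and \cite[Theorem 1]{Larusson2019}), so your attempt is measured against those sources rather than against anything internal to the paper. Your Part (1), Oka $\Rightarrow$ CAP, is correct and standard: the continuous extension via a retraction onto a convex compact neighborhood is fine, and BOPA then gives CAP. But your Part (1), CAP $\Rightarrow$ Oka, is a compressed roadmap of Forstneri\v{c}'s proof, not a proof: the splitting lemma on Cartan pairs, the Mergelyan-type approximation needed in the critical case, the passage from strongly pseudoconvex domains to all of $X$, and the case of singular Stein source spaces are each substantial developments. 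The paper deliberately cites this result precisely because, as it says, ``the proof of Forstneri\v{c}'s Oka principle is not easy''; at the level of detail you give, this direction is attribution, not argument.

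The genuine gap is in Part (2). Theorem \ref{theorem:implicit} (and the fiberwise Rouch\'{e} argument behind it) requires the target map to be \emph{uniformly close on $K$ to the core map} $s(\cdot,0)$ of the spray; it does not suffice that the image of an iterated spray $s_{L}\circ\cdots\circ s_{1}$ contain a neighborhood of $f(K)$. Surjectivity gives at best pointwise fiberwise solvability with no uniqueness and no control forcing the Rouch\'{e} zeros to assemble into a single-valued holomorphic section near the zero section, so your ``sufficiently long iterated spray'' step fails as stated. The repair --- and the actual argument behind \cite[Theorem 1]{Larusson2019} and \cite[Corollary 6.15.2]{Forstneric2017} --- is a stepping argument exploiting convexity of $K$: fixing $x_{0}\in K$, the maps $f_{t}(x)=f((1-t)x_{0}+tx)$ form a homotopy of \emph{holomorphic} maps on $\Op K$ from the constant (hence regular) map $f_{0}\equiv f(x_{0})$ to $f_{1}=f$. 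Choose a subdivision $0=t_{0}<\cdots<t_{m}=1$ so fine that consecutive maps are uniformly close, and induct: given a regular $g_{i}$ close to $f_{t_{i}}$ on $K$, apply algebraic $\Ell_{1}$ to $g_{i}$ to obtain an algebraic dominating spray $s_{i}:\C^{n}\times\C^{N_{i}}\to Y$ (the algebraic bundle is trivialized by Quillen--Suslin or by adding a complement), use the fixed-spray variant of Theorem \ref{theorem:implicit} (same Rouch\'{e} proof, with the spray fixed and the nearby target map varying) to write $f_{t_{i+1}}=s_{i}\circ(\id,\varphi_{i})$ on $\Op K$, approximate $\varphi_{i}$ by a polynomial map $\widetilde\varphi_{i}$ via Oka--Weil, and set $g_{i+1}=s_{i}\circ(\id,\widetilde\varphi_{i})$. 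This also fixes a point you elide: algebraic $\Ell_{1}$ furnishes algebraic sprays only over \emph{regular} core maps, so each intermediate core map must be re-regularized by polynomial approximation --- your ``chain of spray translates'' provides no mechanism for that, since the intermediate maps $f_{t}$ are merely holomorphic.
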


Let us prove the stability of $\aCAP$ under blowups which implies Corollary \ref{corollary:blowup} (compare with the instability of CAP \cite[Example A.3]{Kusakabe2019}).

\begin{corollary}
\label{corollary:aCAP}
Let $Y$ be an algebraic manifold and $A\subset Y$ be a closed algebraic submanifold.
If $Y$ enjoys $\aCAP$, then the blowup $\Bl_A Y$ also enjoys $\aCAP$.
In particular, the blowup $\Bl_{A}Y$ is Oka.
\end{corollary}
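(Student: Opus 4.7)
The ``in particular'' statement is immediate from Theorem \ref{theorem:CAP}(1) once $\aCAP$ is established for $\Bl_{A}Y$; the task is therefore to show stability of $\aCAP$ under blowing up along a closed algebraic submanifold.

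Fix a compact convex $K\subset\C^{n}$ and a holomorphic map $f:V\to\Bl_{A}Y$ on an open neighborhood $V$ of $K$. Write $\pi:\Bl_{A}Y\to Y$ for the blowdown and set $g:=\pi\circ f:V\to Y$. By Lemma \ref{lemma:local_spray} there is a holomorphic dominating spray $s_{g}:X\times U\to Y$ over $g|_{X}$, for some Stein open neighborhood $X\subset V$ of $K$ and an open neighborhood $U\subset\C^{N}$ of the origin. Choose convex relatively compact neighborhoods $V'\Subset X$ of $K$ and $U''\Subset U$ of $0$; the product $\overline{V'}\times\overline{U''}$ is compact and convex in $\C^{n+N}$, so $\aCAP$ of $Y$ (Theorem \ref{theorem:CAP}(2)) provides a regular map $\tilde{s}:\C^{n+N}\to Y$ uniformly close to $s_{g}$ on $\overline{V'}\times\overline{U''}$. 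The implicit function theorem for sprays (Theorem \ref{theorem:implicit}), applied to $s_{g}$ with $\Omega=V'$, then yields a holomorphic map $\varphi:V'\to U$ with $\tilde{s}(x,\varphi(x))=g(x)$ for all $x\in V'$, as soon as the approximation above is fine enough.

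To transfer the picture to $\Bl_{A}Y$, invoke the universal property of the blowup: the regular morphism $\tilde{s}:\C^{n+N}\to Y$ induces a regular morphism $\hat{s}:\widetilde{Z}\to\Bl_{A}Y$, where $\widetilde{Z}:=\Bl_{\tilde{s}^{-1}(A)}(\C^{n+N})$ is the blowup along the scheme-theoretic preimage $\tilde{s}^{-1}(A)$. Because $f$ already lifts $g$ along $\pi$, the preimage $g^{-1}(A)=(\id,\varphi)^{-1}(\tilde{s}^{-1}(A))$ is Cartier in $V'$, and hence the section $(\id,\varphi):V'\to\C^{n+N}$ lifts uniquely to a holomorphic section $\hat{\varphi}:V'\to\widetilde{Z}$ of the composed projection $q:\widetilde{Z}\to\C^{n+N}\to\C^{n}$; uniqueness of the blowup lift forces $\hat{s}\circ\hat{\varphi}=f|_{V'}$. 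The problem is thereby reduced to approximating $\hat{\varphi}$ uniformly on $K$ by a regular section $\tilde{\varphi}:\C^{n}\to\widetilde{Z}$ of $q$, because composition with the regular morphism $\hat{s}$ then yields the desired regular approximation of $f$ on $K$.

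The main obstacle is this last step: $\widetilde{Z}$ is a non-affine blowup, so straightforward polynomial approximation of the components of $\hat{\varphi}$ in ambient coordinates need not respect the Cartier condition along $\tilde{s}^{-1}(A)$. The plan is to decompose $\hat{\varphi}$ into its base component $\varphi:V'\to\C^{N}$ (approximated regularly on $K$ by the classical Cartan--Oka--Weil theorem) and its projective ``direction'' component along the exceptional divisor of $\widetilde{Z}$, which is realised as a section of a $\P^{k-1}$-bundle ($k=\operatorname{codim}_{Y}A$) over a regular hypersurface in $\C^{n}$ approximating $g^{-1}(A)$; this latter approximation uses that $\P^{k-1}$ is Oka. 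Piecing together these two approximations and performing an interpolation along the approximating hypersurface produces the desired regular section $\tilde{\varphi}$, completing the verification of $\aCAP$ for $\Bl_{A}Y$.
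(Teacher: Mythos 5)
Your first half follows the paper's route: dominating local spray over $\pi\circ f$ (Lemma \ref{lemma:local_spray}), regular approximation $\tilde{s}$ by $\aCAP$ of $Y$, and Theorem \ref{theorem:implicit} to produce $\varphi$ with $\tilde{s}\circ(\id,\varphi)=\pi\circ f$, followed by a lift to a blowup of affine space along the scheme-theoretic preimage $\tilde{s}^{-1}(A)$. (Two small cautions there: the invertibility of the inverse image ideal, hence the lift $\hat{\varphi}$ and the identity $\hat{s}\circ\hat{\varphi}=f$, fails in the degenerate case $g(V')\subset A$, and $\widetilde{Z}=\Bl_{\tilde{s}^{-1}(A)}(\C^{n+N})$ is in general a singular variety, not an algebraic manifold, so none of the cited Oka-type results apply to it as it stands.)

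The genuine gap is the last step, which you yourself flag as the ``main obstacle'' and then only sketch as a plan: approximating $\hat{\varphi}$ uniformly on $K$ by a \emph{regular} map into the blowup. Your proposed decomposition into the base component plus a projective direction component along an approximating hypersurface does not work as described: (i) $\aCAP$ requires approximation by regular maps, whereas the Oka property of $\P^{k-1}$ only yields holomorphic approximation/interpolation, and there is no algebraic interpolation theorem available to prescribe direction data along a hypersurface; (ii) even if you regularly approximate $\varphi$, the preimage of $A$ under $\tilde{s}$ composed with the approximant need not be Cartier (or the lift need not stay regular), which is exactly the difficulty you set out to solve, so the argument is circular; (iii) the center $\tilde{s}^{-1}(A)$ need not be a hypersurface or smooth, so the exceptional set is not a $\P^{k-1}$-bundle over anything tractable. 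The paper resolves precisely this point with two ingredients you are missing: it removes the codimension-two set $\Sigma=\Sing(s)\cap\{\text{non-Cartier locus of }s^{-1}(A)\}$ (which $\varphi(K)$ avoids because $s$ is dominating there and $\varphi$ is close to the zero section), so that the center $s^{-1}(A)\setminus\Sigma$ is at every point either Cartier or smooth; then it invokes the theorem of L\'{a}russon--Truong \cite[Theorem 1]{Larusson2017} to conclude that $\Bl_{s^{-1}(A)\setminus\Sigma}(\C^{n+N}\setminus\Sigma)$ is algebraically Oka, hence enjoys $\aCAP$ by Theorem \ref{theorem:CAP}(2), which supplies the regular approximation of the lifted map (it need not even be a section); composing with the induced regular map to $\Bl_{A}Y$ finishes the proof. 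Without an input of this kind your final approximation step remains unproved.
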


\begin{proof}
Let $\pi:\Bl_A Y\to Y$ denote the blowup.
Take a compact convex set $K\subset\C^{n}$ and a holomorphic map $f:\Op K\to\Bl_{A}Y$.
By Lemma \ref{lemma:local_spray}, there exists a dominating local spray $s':\Op K\times\Op\overline{\D^{N}}\to Y$ over $\pi\circ f$.
Since $Y$ enjoys aCAP, there exists a regular map $s:\C^{n}\times\C^{N}\to Y$ which approximates $s'$ on $\Op K\times\Op\overline{\D^{N}}$.
Then Theorem \ref{theorem:implicit} implies the existence of a holomorphic section $\varphi:\Op K\to\Op K\times\C^{N}$ of the trivial bundle which is close to the zero section and satisfies $s\circ\varphi=\pi\circ f$.
Let $\Sing(s)$ denote the singular locus of $s$ and $\Sigma\subset\C^{n+N}$ be the intersection of $\Sing(s)$ and the non-Cartier locus of the scheme theoretic inverse image $s^{-1}(A)$.
Then $\Sigma\subset\C^{n+N}$ is a closed algebraic subvariety of codimension at least two because $\C^{n+N}$ is smooth.
Since $s|_{\Op K\times\C^{N}}$ is dominating and $\varphi$ is close to the zero section, we may assume that $\varphi(K)\subset\C^{n+N}\setminus\Sing(s)\subset\C^{n+N}\setminus\Sigma$.
Note that the pullback $s^{*}\Bl_{A}Y\to\C^{n+N}\setminus\Sigma$ of $\Bl_{A}Y\to Y$ along the restriction $s:\C^{n+N}\setminus\Sigma\to Y$ coincides with the blowup $\Bl_{s^{-1}(A)\setminus\Sigma}(\C^{n+N}\setminus\Sigma)\to\C^{n+N}\setminus\Sigma$ over $\C^{n+N}\setminus\Sing(s)$.
By the universality of the pullback, there exists a holomorphic map $\widetilde\varphi:\Op K\to\Bl_{s^{-1}(A)\setminus\Sigma}(\C^{n+N}\setminus\Sigma)$ such that the following diagram commutes:
\begin{center}
\begin{tikzcd}
\Op K
\arrow[drr, bend left=15, "f"]
\arrow[ddr, bend right=20, "\varphi"']
\arrow[dr, dashed, "\widetilde\varphi"] & & \\
& \Bl_{s^{-1}(A)\setminus\Sigma}(\C^{n+N}\setminus\Sigma) \arrow[r, "\pi^{*}s"] \arrow[d, "s^{*}\pi"] & \Bl_{A}Y \arrow[d, "\pi"] \\
& \C^{n+N}\setminus\Sigma \arrow[r, "s"] & Y
\end{tikzcd}
\end{center}
By definition, if $s^{-1}(A)\setminus\Sigma$ is non-Cartier at $z\in s^{-1}(A)\setminus\Sigma$ then $s^{-1}(A)\setminus\Sigma$ is smooth at $z$.
Thus the result of L\'{a}russon and Truong \cite[Theorem 1]{Larusson2017} implies that the blowup $\Bl_{s^{-1}(A)\setminus\Sigma}(\C^{n+N}\setminus\Sigma)$ is algebraically Oka.
By Theorem \ref{theorem:CAP}, there exists a regular map $\tilde f:\C^n\to\Bl_{s^{-1}(A)\setminus\Sigma}(\C^{n+N}\setminus\Sigma)$ which approximates $\widetilde\varphi$ uniformly on $K$.
Then the regular map $\pi^{*}s\circ\tilde f:\C^{n}\to\Bl_{A}Y$ approximates $f$ on $K$.
\end{proof}

\begin{remark}
In his seminal paper \cite{Gromov1989}, Gromov asked whether the algebraic Oka property is a birational invariant \cite[Remark 3.5.E$'''$]{Gromov1989} and this problem is still open.
Since Corollary \ref{corollary:aCAP} and the converse of (2) in Theorem \ref{theorem:CAP} would imply the stability of the algebraic Oka property under blowups, it is natural to ask whether the converse of (2) in Theorem \ref{theorem:CAP} holds.
It will be studied in future work.
\end{remark}

\appendix

%
%

\section{Elliptic characterization and localization of equivariantly Oka manifolds}

As we mentioned in the introduction, Oka manifolds (resp. algebraically Oka manifolds) are characterized by $\Ell_{1}$ (resp. the algebraic version of $\Ell_{1}$).
These characterizations imply the following localization principles.

\begin{theorem}[{cf. \cite[Theorem 1.4]{Kusakabe2019} and \cite[Proposition 6.4.2]{Forstneric2017}}]
\label{theorem:localization}
Let $Y$ be a complex manifold (resp. an algebraic manifold).
Assume that each point of $Y$ has an open Oka (resp. algebraically Oka) neighborhood with respect to the analytic (resp. algebraic) Zariski topology\footnote{A subset of $Y$ is open with respect to the analytic (resp. algebraic) Zariski topology if its complement is a closed complex subvariety (resp. a closed algebraic subvariety).}.
Then $Y$ is Oka (resp. algebraically Oka).
\end{theorem}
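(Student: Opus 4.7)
The plan is to verify Gromov's condition $\Ell_{1}$ (resp.\ its algebraic counterpart) for $Y$, which by the characterizations recalled in the excerpt is equivalent to $Y$ being Oka (resp.\ algebraically Oka). So let $f:X\to Y$ be a holomorphic (resp.\ regular) map from a Stein (resp.\ affine) manifold; the goal is to produce a dominating global spray over $f$.

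The strategy is to assemble such a spray from partial dominating sprays indexed by a Zariski open Oka cover of $Y$. For each $y\in Y$, fix a Zariski open Oka (resp.\ algebraically Oka) neighborhood $V_{y}\subset Y$, and set $A_{y}:=Y\setminus V_{y}$ and $B_{y}:=f^{-1}(A_{y})\subset X$, both closed (analytic resp.\ algebraic) subvarieties. For each $y$ I would construct a (possibly degenerate) spray $s_{y}:E_{y}\to Y$ over $f$ on a holomorphic (resp.\ algebraic) vector bundle $E_{y}\to X$ that is dominating precisely on $X\setminus B_{y}$: the $\Ell_{1}$ property of $V_{y}$, applied to auxiliary Stein (resp.\ affine) test manifolds mapping into $V_{y}$ through $f$, yields local dominating sprays, which are then patched into a global spray on $X$ by Cartan's Theorems A and B, with multiplication by sections of the ideal sheaf of $B_{y}$ used to kill any poles appearing across $B_{y}$. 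Composing such partial sprays via Gromov's composed spray construction yields a spray dominating on $X\setminus\bigcap_{y}B_{y}=X$. In the algebraic case, noetherianity of the Zariski topology ensures that a finite composition suffices; in the holomorphic case, the composition is performed along an exhaustion by compact subsets of $X$ together with a convergence argument controlling the fiber dimensions of the $E_{y}$.

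The main technical obstacle is the construction of the partial sprays, since $X\setminus B_{y}$ need not be Stein (resp.\ affine) when $B_{y}$ has codimension at least two -- for instance, $\C^{2}\setminus\{0\}$ is not Stein -- so one cannot directly apply the $\Ell_{1}$ property of $V_{y}$ to the restriction $f|_{X\setminus B_{y}}$. Overcoming this through the degenerate-spray construction outlined above is the heart of the argument; in the holomorphic case it was carried out in \cite[Theorem 1.4]{Kusakabe2019}, and the parallel argument in the algebraic case can be handled using the algebraic analogues of the relevant sheaf-theoretic tools on affine manifolds together with \cite[Theorem 1]{Larusson2019}.
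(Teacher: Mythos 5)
The paper itself offers no proof of Theorem \ref{theorem:localization}: it is quoted from the literature, the analytic case being exactly \cite[Theorem 1.4]{Kusakabe2019}, and the sentence ``These characterizations imply the following localization principles'' indicates that the intended route is precisely the one you sketch, namely verifying $\Ell_{1}$ (resp.\ its algebraic version, which is the very definition of algebraically Oka in this paper) by producing, for each Zariski open Oka piece $V_{y}=Y\setminus A_{y}$, a spray over $f$ dominating off $B_{y}=f^{-1}(A_{y})$ and then composing, with the extension-across-$A_{y}$ technique as in \cite[Proposition 6.4.2]{Forstneric2017}. So in outline you are reconstructing the cited argument rather than competing with one in this paper; deferring the analytic core to \cite[Theorem 1.4]{Kusakabe2019} mirrors what the paper does, but note that in the analytic case this makes your write-up circular as a self-contained proof, since that theorem \emph{is} the statement being proved.

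Two concrete soft spots remain. (i) The step ``multiplication by sections of the ideal sheaf of $B_{y}$ kills the poles'' is the entire technical content of the partial-spray lemma, and as stated it is a reliable mechanism only in the algebraic category, where pole orders along $B_{y}$ are finite and a fixed power of a regular function vanishing on $B_{y}$ can be used to rescale the fibre variable (this is how the algebraic localization in \cite[Proposition 6.4.2]{Forstneric2017} works). In the analytic category, a dominating spray obtained over a Stein set $\{g\neq 0\}\subset X$ from the Oka property of $V_{y}$ is just some holomorphic map with no boundary control, and rescaling its fibre variable by powers of $g$ need not extend it across $\{g=0\}$; this is exactly the difficulty resolved in \cite[Theorem 1.4]{Kusakabe2019}, so it cannot be treated as routine Cartan A/B patching. (ii) Your handling of an infinite cover in the analytic case --- ``composition along an exhaustion together with a convergence argument'' --- does not work as stated: a composition of infinitely many sprays does not live on a finite-rank vector bundle, and no limit object is available. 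The standard remedy is a compactness reduction: Oka-ness is verified through test data with relatively compact image (CAP, or dominating sprays over maps from neighborhoods of compact convex sets, cf.\ \cite{Kusakabe}), and a compact image lies in the Zariski open set $Y\setminus(A_{1}\cap\dots\cap A_{m})$ for finitely many pieces, so a finite composition suffices; your appeal to noetherianity correctly disposes of this issue in the algebraic case, but the analytic case should be reduced to finitely many pieces in this way rather than by a limiting procedure.
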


In this appendix, we give the equivariant versions of these facts.
To this end, we need the following definitions.

\begin{definition}
Let $G$ be a reductive complex Lie group and $Y$ be a $G$-manifold (i.e. a complex manifold endowed with a holomorphic action of $G$).
\begin{enumerate}[leftmargin=*]
\item $Y$ is a \emph{$G$-Oka manifold} if for any reductive closed subgroup $H$ of $G$ the fixed-point manifold $Y^{H}$ is Oka (cf. \cite[\S 2]{Kutzschebauchc}).
\item $Y$ satisfies \emph{Condition $G$-$\Ell_{1}$} if for any $G$-equivariant holomorphic map $f:X\to Y$ from a Stein $G$-manifold there exists a $G$-equivariant dominating global spray $s:E\to Y$ over $f$ where $E\to X$ is a holomorphic $G$-vector bundle.
\end{enumerate}
\end{definition}

\begin{theorem}
\label{theorem:G-Ell_1}
For any reductive complex Lie group $G$, a $G$-manifold $Y$ is $G$-Oka if it satisfies Condition $G$-$\Ell_{1}$.
\end{theorem}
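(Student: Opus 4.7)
The strategy is to verify Gromov's condition $\Ell_{1}$ on each fixed-point manifold $Y^{H}$, for $H$ ranging over reductive closed subgroups of $G$, and then invoke the characterization of Oka manifolds by $\Ell_{1}$ recalled in the introduction.

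Given a holomorphic map $f:Z\to Y^{H}$ from a Stein manifold $Z$, I form $X=G/H\times Z$ with $G$ acting by left translation on the first factor and trivially on the second. By Matsushima's theorem, $G/H$ is Stein (since $H$ is reductive and closed in the reductive group $G$), so $X$ is a Stein $G$-manifold. The map $f':X\to Y$ defined by $f'(gH,z)=g\cdot f(z)$ is well defined because $f$ takes values in $Y^{H}$, and is manifestly holomorphic and $G$-equivariant. Condition $G$-$\Ell_{1}$ then yields a holomorphic $G$-vector bundle $E\to X$ together with a $G$-equivariant dominating global spray $s:E\to Y$ over $f'$.

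Next, I restrict to the slice $\{eH\}\times Z\cong Z$, which is pointwise $H$-fixed. Hence $E|_{Z}$ is a holomorphic $H$-vector bundle over $Z$ with trivial action on the base. Using the Reynolds operator of the reductive group $H$ (equivalently, averaging over a maximal compact subgroup of $H$), the subspace $E^{H}\subset E|_{Z}$ of $H$-invariant vectors is a holomorphic subbundle. The restriction $s_{0}:=s|_{E^{H}}$ takes values in $Y^{H}$ (its domain is pointwise $H$-fixed and $s$ is $H$-equivariant) and is a spray over $f$. To see $s_{0}$ is dominating, fix $z\in Z$. The differential $\der(s|_{E_{z}})_{0_{z}}:E_{z}\to \T_{f(z)}Y$ is an $H$-equivariant linear surjection. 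Since $f(z)\in Y^{H}$ and $H$ is reductive, the slice theorem gives $Y^{H}$ a smooth submanifold structure with $\T_{f(z)}Y^{H}=(\T_{f(z)}Y)^{H}$. Taking $H$-invariants of a surjection of finite-dimensional $H$-representations preserves surjectivity (by functoriality of the Reynolds projection), so the restricted differential $(E^{H})_{z}=(E_{z})^{H}\to (\T_{f(z)}Y)^{H}=\T_{f(z)}Y^{H}$ is also surjective. Thus $Y^{H}$ satisfies $\Ell_{1}$ and is therefore Oka, completing the verification that $Y$ is $G$-Oka.

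The main technical point is verifying that $E^{H}$ is a genuine holomorphic subbundle of $E|_{Z}$; this rests on the existence of a holomorphic Reynolds projection, which is afforded by reductivity of $H$ together with the triviality of the $H$-action on the base. All remaining ingredients (Matsushima's theorem on Steinness of $G/H$, equivariant linear algebra over reductive groups, and Gromov's $\Ell_{1}$-characterization of Oka manifolds) are standard.
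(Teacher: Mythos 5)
Your proposal is correct and follows essentially the same route as the paper: apply Condition $G$-$\Ell_{1}$ to the induced $G$-equivariant map $(G/H)\times Z\to Y$, $([g],z)\mapsto g\cdot f(z)$, restrict the resulting equivariant spray to the slice $\{[e]\}\times Z$, and pass to the $H$-invariant subbundle to get a dominating spray over $f$ into $Y^{H}$. The extra details you supply (Steinness of $G/H$ via Matsushima, the holomorphic Reynolds projection giving the subbundle $E^{H}$, surjectivity on $H$-invariants and $\T_{f(z)}Y^{H}=(\T_{f(z)}Y)^{H}$) are exactly the points the paper handles by citation, so this is the same argument, just more fully spelled out.
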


\begin{proof}
Let $H$ be a reductive closed subgroup of $G$, $X$ be a Stein manifold and $f:X\to Y^{H}$ be a holomorphic map.
Consider the holomorphic action of $G$ on the Stein manifold $(G/H)\times X$ defined by $g\cdot([g'],x)=([gg'],x)$ and the $G$-equivariant holomorphic map $\tilde f:(G/H)\times X\to Y$, $([g],x)\mapsto g\cdot f(x)$.
By $G$-$\Ell_{1}$ of $Y$, there exists a $G$-equivariant dominating global spray $s:E\to Y$ over $\tilde f$.
Consider the holomorphic $H$-vector bundle $E_{0}=E|_{\{[1]\}\times X}$ over $X\cong \{[1]\}\times X$.
Then the fixed-point manifold $E_{0}^{H}$ is a holomorphic vector bundle over $X$ (cf. the  proof of \cite[Proposition 3.2]{Kutzschebauchc}) and the restriction $E_{0}^{H}\to Y^{H}$ of $s$ is a dominating global spray over $f:X\to Y^{H}$.
Therefore $Y^{H}$ satisfies $\Ell_{1}$ and thus it is Oka.
\end{proof}

In the case when $G$ is finite, Kutzschebauch, L\'{a}russon and Schwarz \cite[Corollary 4.2]{Kutzschebauchc} proved that the $G$-Oka property is equivalent to $G$-BOPAJI which easily implies Condition $G$-$\Ell_{1}$ (see also the proof of \cite[Corollary 4.3]{Kutzschebauchc}).
Thus we obtain the following characterization of $G$-Oka manifolds.

\begin{corollary}
\label{corollary:characterization}
For any finite group $G$, a $G$-manifold is $G$-Oka if and only if it satisfies Condition $G$-$\Ell_{1}$.
\end{corollary}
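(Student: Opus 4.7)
The plan is to prove the two implications separately. The \emph{if} direction is immediate from Theorem \ref{theorem:G-Ell_1}, which holds for any reductive complex Lie group. The substantive content is the converse, for which my strategy is: assuming $Y$ is $G$-Oka, first invoke \cite[Corollary 4.2]{Kutzschebauchc} to conclude that $Y$ satisfies $G$-$\BOPAJI$, and then deduce Condition $G$-$\Ell_{1}$ from $G$-$\BOPAJI$ by the equivariant analogue of the standard argument that $\BOPAJI$ implies $\Ell_{1}$ in the non-equivariant case.

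To carry out this deduction, let $f\colon X\to Y$ be a $G$-equivariant holomorphic map from a Stein $G$-manifold. First I would construct a $G$-equivariant local dominating spray $s_{0}\colon U\to Y$ over $f$ on an open neighborhood $U$ of the zero section of some holomorphic $G$-vector bundle $E\to X$. For a finite group $G$ this is obtained by averaging a non-equivariant local dominating spray produced by Lemma \ref{lemma:local_spray} over $G$, or, equivalently, by running the proof of that lemma with the equivariant Cartan A theorem applied to the sheaf of $G$-equivariant vector fields on $Y$ pulled back along $f$. Next I would extend $s_{0}$ to a continuous $G$-equivariant map $\tilde s\colon E\to Y$ with $\tilde s(0_{x})=f(x)$ for all $x\in X$, using a $G$-equivariant continuous retraction of $E$ onto a relatively compact $G$-invariant neighborhood of the zero section contained in $U$. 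Finally, applying $G$-$\BOPAJI$ to $\tilde s$ along the closed $G$-invariant subvariety $X\subset E$ (the zero section) with jet interpolation of order one, I would obtain a holomorphic $G$-equivariant map $s\colon E\to Y$ that agrees with $\tilde s$ to first order along $X$; in particular $s$ restricts to $f$ on the zero section and its fiberwise differential at each $0_{x}$ coincides with that of $s_{0}$, so $s$ remains dominating. Thus $s$ is a $G$-equivariant dominating global spray over $f$, establishing Condition $G$-$\Ell_{1}$.

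The main obstacle I expect lies in verifying that the equivariant preparatory tools — the equivariant version of Lemma \ref{lemma:local_spray} and the existence of a $G$-equivariant continuous retraction onto a $G$-invariant tubular neighborhood — are genuinely available here; for finite $G$ both reduce to averaging arguments and are routine, after which the application of $G$-$\BOPAJI$ (with first-order jet interpolation along the zero section) finishes the proof cleanly and in parallel with the classical case.
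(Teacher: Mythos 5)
Your overall route is exactly the paper's: the ``if'' direction is Theorem \ref{theorem:G-Ell_1}, and for the converse the paper likewise quotes \cite[Corollary 4.2]{Kutzschebauchc} (equivalence of the $G$-Oka property with $G$-$\BOPAJI$ for finite $G$) and then notes that $G$-$\BOPAJI$ implies Condition $G$-$\Ell_{1}$, referring to the proof of \cite[Corollary 4.3]{Kutzschebauchc} for that step. The step you chose to spell out yourself is, however, where your argument has a genuine gap: the construction of a $G$-equivariant dominating local spray over $f$. Averaging over $G$ is not an available operation here, since a spray takes values in the manifold $Y$, which carries no linear structure, so ``averaging a non-equivariant local dominating spray'' does not define a map. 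Your alternative, running the proof of Lemma \ref{lemma:local_spray} with $G$-invariant vector fields, fails at points with nontrivial isotropy: for the $\Z/2$-action $z\mapsto -z$ on $\C$, every invariant holomorphic vector field vanishes at the fixed point, so flows of invariant fields can never dominate there. In fact, domination at a fixed point of the $G$-action on $X$ mapping to a fixed point of $Y$ forces the fibre of $E$ over that point to carry a nontrivial $G$-representation, which neither averaging nor a spray on a trivially-acted bundle $\Op K\times\C^{N}$ can produce. (Note also that Lemma \ref{lemma:local_spray} is stated over a Stein compact, whereas here you need a local spray over all of the Stein $G$-manifold $X$.)

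The needed ingredient is nevertheless true, and the repair is essentially the argument behind \cite[Corollary 4.3]{Kutzschebauchc}, which is precisely what the paper cites: the graph $\Gamma_{f}\subset X\times Y$ is a $G$-invariant closed Stein submanifold whose normal bundle is the $G$-vector bundle $f^{*}\T Y$; an equivariant Stein neighborhood together with the equivariant Docquier--Grauert tubular neighborhood theorem (available for reductive, in particular finite, $G$) identifies a $G$-invariant neighborhood of $\Gamma_{f}$ with a neighborhood of the zero section of $f^{*}\T Y$, and composing with the projection to $Y$ gives a $G$-equivariant dominating local spray over $f$. With that ingredient in hand, your continuous $G$-equivariant extension to all of $E$ and the application of $G$-$\BOPAJI$ with first-order jet interpolation along the zero section do complete the deduction of $G$-$\Ell_{1}$ as you describe, and the rest of your proposal coincides with the paper's proof.
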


The following is the equivariant version of the localization principle.

\begin{theorem}
\label{theorem:equivariant_localization}
Let $G$ be a reductive complex Lie group and $Y$ be a $G$-manifold.
Assume that each point of $Y$ has a $G$-invariant open $G$-Oka neighborhood with respect to the analytic Zariski topology.
Then $Y$ is $G$-Oka.
\end{theorem}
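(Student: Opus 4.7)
The plan is to reduce Theorem \ref{theorem:equivariant_localization} to the non-equivariant localization principle (Theorem \ref{theorem:localization}), applied separately to each fixed-point submanifold of $Y$. By the definition of a $G$-Oka manifold, it suffices to prove that the fixed-point submanifold $Y^{H}$ is Oka for every reductive closed subgroup $H\subset G$; the case $Y^{H}=\emptyset$ is vacuous, so I would fix such an $H$ together with a point $y\in Y^{H}$.

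By hypothesis there is a $G$-invariant neighborhood $U$ of $y$ that is open in the analytic Zariski topology of $Y$ and is $G$-Oka. Because $U$ is in particular $H$-invariant, the intersection $U^{H}=U\cap Y^{H}$ is Oka by the very definition of the $G$-Oka property applied to the reductive closed subgroup $H\subset G$. It remains to verify that $U^{H}$ is open in $Y^{H}$ with respect to the analytic Zariski topology of $Y^{H}$: the complement $Y\setminus U$ is a closed complex subvariety of $Y$, and its intersection with the complex submanifold $Y^{H}$ (which is a submanifold because $H$ is reductive) is the closed complex subvariety $Y^{H}\setminus U^{H}$ of $Y^{H}$.

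Thus every point of $Y^{H}$ admits a Zariski open Oka neighborhood in $Y^{H}$, and Theorem \ref{theorem:localization} yields that $Y^{H}$ is Oka. Since $H$ was an arbitrary reductive closed subgroup of $G$, the manifold $Y$ is $G$-Oka. Notably, this argument bypasses Theorem \ref{theorem:G-Ell_1}, which would otherwise be the natural tool to imitate the non-equivariant proof fibrewise but would require the converse of Theorem \ref{theorem:G-Ell_1} that is not available in general. The only mildly technical points are the smoothness of $Y^{H}$ and the restriction of subvarieties to $Y^{H}$, both of which are standard consequences of the reductivity of $H$; I do not foresee a substantive obstacle.
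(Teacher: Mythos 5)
Your proposal is correct and follows essentially the same route as the paper: reduce to each fixed-point manifold $Y^{H}$, observe that the sets $U^{H}$ are Zariski open Oka subsets covering $Y^{H}$, and apply the non-equivariant localization principle (Theorem \ref{theorem:localization}). Your extra verification that $Y^{H}\setminus U^{H}$ is a closed complex subvariety of $Y^{H}$ is a welcome, if routine, detail that the paper leaves implicit.
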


\begin{proof}
Assume that $Y$ is covered by $G$-invariant open $G$-Oka subsets $U_{\lambda}$ $(\lambda\in\Lambda)$ with respect to the analytic Zariski topology.
Then for a reductive closed subgroup $H$ of $G$ the fixed-point manifold $Y^{H}$ is covered by Zariski open Oka subsets $U_{\lambda}^{H}$ $(\lambda\in\Lambda)$.
Thus $Y^{H}$ is Oka by the usual localization principle (Theorem \ref{theorem:localization}).
\end{proof}

The equivariant localization principle gives the following example of an equivariantly Oka manifold.

\begin{example}
\label{example:toric}
Let $Y$ be a smooth toric variety with the torus action $(\C^{*})^{n}\curvearrowright Y$.
Then $Y$ is covered by $(\C^{*})^{n}$-invariant Zariski open affine subsets of the form $\C^{k}\times(\C^{*})^{n-k}$.
Their fixed-point manifolds with respect to the action of a reductive closed subgroup of $(\C^{*})^{n}$ are also of the form $\C^{j}\times(\C^{*})^{l}$, and hence they are Oka.
Thus the smooth toric variety $Y$ is $(\C^{*})^{n}$-Oka.
\end{example}

%
%

\section*{Acknowledgement}
I wish to thank my supervisor Katsutoshi Yamanoi for helpful comments.
I also thank Franc Forstneri\v{c} and Finnur L\'{a}russon for valuable discussions at Ljubljana in 2018.
An elementary proof of that approximation implies interpolation was asked by L\'{a}russon at that time.
This work was supported by JSPS KAKENHI Grant Number JP18J20418.

%
%


\begin{thebibliography}{10}

\bibitem{Chirka1989}
E.~M. Chirka.
\newblock {\em Complex analytic sets}, volume~46 of {\em Mathematics and its
  Applications (Soviet Series)}.
\newblock Kluwer Academic Publishers Group, Dordrecht, 1989.
\newblock Translated from the Russian by R. A. M. Hoksbergen.

\bibitem{Forstneric2013}
F.~Forstneri\v{c}.
\newblock Oka manifolds: from {O}ka to {S}tein and back.
\newblock {\em Ann. Fac. Sci. Toulouse Math. (6)}, 22(4):747--809, 2013.
\newblock With an appendix by Finnur L\'{a}russon.

\bibitem{Forstneric2017}
F.~Forstneri\v{c}.
\newblock {\em Stein manifolds and holomorphic mappings}, volume~56 of {\em
  Ergebnisse der Mathematik und ihrer Grenzgebiete. 3. Folge. A Series of
  Modern Surveys in Mathematics [Results in Mathematics and Related Areas. 3rd
  Series. A Series of Modern Surveys in Mathematics]}.
\newblock Springer, Cham, second edition, 2017.
\newblock The homotopy principle in complex analysis.

\bibitem{Gromov1989}
M.~Gromov.
\newblock Oka's principle for holomorphic sections of elliptic bundles.
\newblock {\em J. Amer. Math. Soc.}, 2(4):851--897, 1989.

\bibitem{Kaliman2018}
S.~Kaliman, F.~Kutzschebauch, and T.~T. Truong.
\newblock On subelliptic manifolds.
\newblock {\em Israel J. Math.}, 228(1):229--247, 2018.

\bibitem{Kusakabe2019}
Y.~Kusakabe.
\newblock Elliptic characterization and localization of {O}ka manifolds.
\newblock {\em Indiana Univ. Math. J.}, to appear.

\bibitem{Kusakabe}
Y.~Kusakabe.
\newblock Elliptic characterization and unification of {O}ka maps.
\newblock arXiv:2003.03757.

\bibitem{Kutzschebauchc}
F.~Kutzschebauch, F.~L\'{a}russon, and G.~W. Schwarz.
\newblock Gromov's {O}ka principle for equivariant maps.
\newblock arXiv:1912.07129.

\bibitem{Larkang2016}
R.~L\"{a}rk\"{a}ng and F.~L\'{a}russon.
\newblock Extending holomorphic maps from {S}tein manifolds into affine toric
  varieties.
\newblock {\em Proc. Amer. Math. Soc.}, 144(11):4613--4626, 2016.

\bibitem{Larusson2005}
F.~L\'{a}russon.
\newblock Mapping cylinders and the {O}ka principle.
\newblock {\em Indiana Univ. Math. J.}, 54(4):1145--1159, 2005.

\bibitem{Larusson2017}
F.~L\'{a}russon and T.~T. Truong.
\newblock Algebraic subellipticity and dominability of blow-ups of affine
  spaces.
\newblock {\em Doc. Math.}, 22:151--163, 2017.

\bibitem{Larusson2019}
F.~L\'{a}russon and T.~T. Truong.
\newblock Approximation and interpolation of regular maps from affine varieties
  to algebraic manifolds.
\newblock {\em Math. Scand.}, 125(2):199--209, 2019.

\bibitem{Siu1976}
Y.~T. Siu.
\newblock Every {S}tein subvariety admits a {S}tein neighborhood.
\newblock {\em Invent. Math.}, 38(1):89--100, 1976/77.

\end{thebibliography}
\end{document}